\theoremstyle{plain}
\newtheorem{thm}{Theorem}
\newtheorem{cor}[thm]{Corollary}
\newtheorem{lem}[thm]{Lemma}
\newtheorem{prop}[thm]{Proposition}
\theoremstyle{definition}
\newtheorem{Def}[thm]{Definition}
\newtheorem{ex}[thm]{Example}
\def\N{\mathbb N}
\def\Z{\mathbb{Z}}
\def\Z2{\mathbb Z_2}
\def\C{\mathcal{C}}
\def\S{\mathcal{S}}
\def\T{\mathcal{T}}
\def\d{\delta}
\def\df{\partial_\tau ^{F}}
\def\dg{\partial_\tau ^{G}}
\def\D{\Delta}
\def\t{\tau}
\def\ot{\otimes}
\def\oth{\mathbin{\hat{\otimes}}}
\def\dif{\partial_{\otimes} ^F}
\def\dig{\partial_{\otimes} ^G}
\def\r{\rho}
\def\cb{C_* (B)}
\def\ce{C_* (E)}
\def\cg{C_* (G)}
\def\cf{C_* (F)}
\def\ep{\varepsilon}
\def\di{\partial}
\def\co{\circ}
\def\EML{\mathrm{EML}}
\def\AW{\mathrm{AW}}
\def\SH{\mathrm{SH}}
\def\lra{\Leftrightarrow}
\def\ra{\Rightarrow}
\def\la{\Leftarrow}
\begin{document}
\title{Effective chain complexes for twisted products}
\author{Marek Filakovsk\'{y}}

\address{Department of Mathematics and Statistics\\ Masaryk University\\
Kotl\'{a}\v{r}sk\'{a} 2, 611 37 Brno, Czech Republic}
\email{xfilakov@math.muni.cz}
\keywords{twisted product, reduction, chain complex}
\subjclass[2010]{Primary 18G35; Secondary 55U15}

\begin{abstract}
 In the paper weak sufficient conditions for the reduction of the chain complex of a twisted cartesian product $F \times_{\t}B$ to a chain complex of free finitely generated abelian groups are found.
\end{abstract}
\maketitle

\section{Introduction}

When making algorithmic calculations with simplicial sets in algebraic topology on the level of chain complexes, 
it is often useful to replace the chain complex $C_* (X)$ associated to a simplicial set $X$ with another chain complex $EC_*$ 
where all the groups $EC_n$ are finitely generated free abelian. Such chain complexes are called \emph{effective}. This replacement is usually 
obtained using a reduction or a strong equivallence.

It is therefore natural to ask if standard topological constructions with simplicial sets are reflected by our replacements. 
For example by the theorem of Eilenberg and Zilber we know that given simplicial sets $X, Y$ and their effective chain 
complexes $EC_* (X), EC_* (Y)$, the simplicial set $X \times Y$ has an effective chain complex ${EC_* (X) \ot EC_* (Y)}.$

Let $F\to E\to B$ be a Kan fibration of simplicial sets. By \cite{may} we may think of the total 
space $E$ as $E = F \times_\t B$, i.e. a twisted cartesian product. We want to find an effective chain complex of the total 
space $E$ from the knowledge of effective chain complexes of $F$ and $B$ and the twisting operator $\tau$.

In \cite{serg} (Theorem 132) the solution of this problem was given in the case when the space $B$ is $1$--reduced, 
which means that the $1$--skeleton of $B$ is a point. However, this condition seems to be too restrictive and not necessary.
For example if we aim to generalize the results in the paper \cite{cadek} and construct an equivariant version of the Postnikov 
tower one cannot assume the base spaces are even 0--reduced (see \cite{aslep}).
In Theorem \ref{thm1} and Corollary \ref{pro2} we give weaker conditions under which an effective chain complex for the twisted cartesian product can be found. 
Our approach is based on the results by Shih as presented in \cite{shih} and on the approach from the paper \cite{stasheff}.

\section{Basic notions}

Let $(C_*, \di), (D_*, \di)$ be chain complexes. The triple of maps $\r = (f, g, h)$ 
where $f :C_* \to D_*, g:D_* \to C_*$ are chain homomorphisms and $h:C_* \to C_{* + 1}$ is a chain homotopy such that 
\[
\begin{array}{rlcr}
gf - id_{C_*}&= \di h + h \di,&{ }& hh = 0, \\
gh&= 0, &{ }&fh = 0
\end{array}
\]
is called a reduction. The chain complex $D_*$ is said to be a reduct of $C_*$. We will denote this by $C_*\ra D_*$.

This definition of reduction coincides with the one given in \cite{serg}, Definition 42 or \cite{stasheff}, 2.1.
It is easy to observe that a composition of reductions is a reduction.
We say, there is a \emph{strong equivalence} between chain complexes ${C}_*$ and ${C'}_*$
if there exists a chain complex $D_*$ together with two reductions $\r_1 = (f_1, g_1, h_1):D_* \ra {C}_*$ and 
$\r_2 = (f_2, g_2, h_2):D_* \ra {C'}_*$. We denote this by ${C}_* \la {D}_* \ra {C'}_*$ or ${C}_* \lra {C'}_*$.
The following lemma shows that strong equivalences are in some sense composable.
\begin{lem}[\cite{serg}, Proposition 125] \label{comp}
Let $A_* \lra B_*$ and $B_* \lra C_*$ be strong equivalnces of chain complexes. Then there is a strong equivalence
${A_* \lra C_*}$.
\end{lem}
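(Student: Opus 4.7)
The plan is to exhibit a single chain complex $F_*$ together with reductions $F_* \ra A_*$ and $F_* \ra C_*$. Write the two given strong equivalences as $A_* \la D_* \ra B_*$ and $B_* \la E_* \ra C_*$, with reduction data $\rho_1 = (f_1, g_1, h_1)$, $\rho_2 = (f_2, g_2, h_2)$, $\sigma_1 = (f_1', g_1', h_1')$, and $\sigma_2 = (f_2', g_2', h_2')$. Since the composition of reductions is again a reduction, as noted just before the statement, it suffices to build $F_*$ admitting reductions onto $D_*$ and onto $E_*$; the composites with $\rho_1$ and $\sigma_2$ then deliver the required reductions to $A_*$ and to $C_*$, and hence the strong equivalence $A_* \lra C_*$.

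The candidate $F_*$ is obtained by gluing $D_*$ and $E_*$ along their shared reduct $B_*$. Any reduction $(f,g,h): X_* \to Y_*$ splits the source as a graded abelian group into $X_* = g(Y_*) \oplus \ker f$, with $\ker f$ acyclic and carrying the induced contraction $h$. Applied to $\sigma_1$ this gives $E_* \cong g_1'(B_*) \oplus K_*$ with $K_* := \ker f_1'$. Replacing the $B_*$-summand by $D_*$ via the reduction $\rho_2$, set
\[
F_* := D_* \oplus K_*
\]
equipped with the perturbed differential whose diagonal blocks are $\di_D$ and $\di_E|_{K_*}$, and whose off-diagonal blocks are obtained by transporting the corresponding off-diagonal components of $\di_E$ through $f_2$ and its section $g_2$.

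With this $F_*$, the natural candidates for the two reductions are the projection $F_* \to D_*$, $(d,k) \mapsto d$, and the map $F_* \to E_*$, $(d,k) \mapsto g_1' f_2(d) + k$, with sections and homotopies assembled from the data of $\rho_2$ and $\sigma_1$, in particular $h_2$ on the $D_*$-summand and $h_1'|_{K_*}$ on the second summand. The main obstacle is essentially bookkeeping: verifying first that the perturbed differential squares to zero, and second that the candidate maps satisfy the four reduction identities $gf - \mathrm{id} = \di h + h \di$, $hh = 0$, $fh = 0$, and $gh = 0$. Both reduce to manipulations of the identities carried by the given reductions, notably $f_2 h_2 = 0$, $h_1' g_1' = 0$, $f_1' h_1' = 0$, together with the homotopy formulae $g_i f_i - \mathrm{id} = \di h_i + h_i \di$; the argument is in spirit an instance of the basic perturbation lemma applied to the direct-sum decomposition $E_* = g_1'(B_*) \oplus K_*$.
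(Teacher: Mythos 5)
The paper gives no proof of this lemma at all (it explicitly defers to \cite{serg}, Proposition 125), so the only comparison available is with the cited source; your construction is essentially that standard gluing argument, and it does go through --- but you have made it look harder than it is. The decomposition $E_* = g_1'(B_*)\oplus K_*$ with $K_*=\ker f_1'$ is a direct sum of \emph{subcomplexes}: $K_*$ is preserved by the differential because $f_1'$ is a chain map, and $g_1'(B_*)$ is preserved because $g_1'$ is. Consequently the ``off-diagonal components of $\di_E$'' that you propose to transport are identically zero, the differential on $F_*=D_*\oplus K_*$ is simply the block-diagonal one $\di_D\oplus \di_E|_{K_*}$, it squares to zero for free, and no perturbation-lemma reasoning is needed anywhere; the ``main obstacle'' you defer as bookkeeping largely evaporates. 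Concretely: for $F_*\ra D_*$ take the projection, the inclusion, and the homotopy $(0,\,h_1'|_{K_*})$ (note $h_1'(E_*)\subseteq K_*$ because $f_1'h_1'=0$, and on $K_*$ the homotopy identity for $\sigma_1$ degenerates to $\di h_1'+h_1'\di=-\mathrm{id}$, which is exactly what the projection--inclusion pair requires); for $F_*\ra E_*$ take $(d,k)\mapsto g_1'f_2(d)+k$, the section $e\mapsto(g_2f_1'e,\;e-g_1'f_1'e)$, and the homotopy $(h_2,0)$, the reduction identities following from $f_2h_2=0$, $h_2g_2=0$, $h_2h_2=0$ and the homotopy identity for $\rho_2$. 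Composing with $\rho_1$ and $\sigma_2$ yields $A_*\la F_*\ra C_*$ as you say. One point you should make explicit: both the splitting $X_*=g(Y_*)\oplus\ker f$ and the computations above use $fg=\mathrm{id}$, an identity absent from the paper's displayed list of reduction axioms but present in Definition 42 of \cite{serg}, with which the paper claims its definition coincides; without it your decomposition of $E_*$ is not available.
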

We omit the proof, it can be found in \cite{serg}. We will make use of the following "tensor product" of reductions.
\begin{lem}\label{tens}
 Let  $\r_C = (f_C, g_C, h_C):C_* \ra {C'}_*$ and $\r_D = (f_D, g_D, h_D):D_* \ra {D'}_* $ be reductions.
Then there is a reduction $$\r_{C \ot D} = (f_{C \ot D}, g_{C \ot D}, h_{C \ot D}): C_* \ot D_* \ra {C'}_* \ot {D'}_*. $$
\end{lem}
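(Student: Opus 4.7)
The plan is to exhibit the desired reduction as a composition of two simpler reductions, exploiting the observation---already recorded in the excerpt---that the composition of reductions is again a reduction.

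First I would construct a ``left'' reduction
\[\r_1 = (f_C \ot \mathrm{id}_D,\ g_C \ot \mathrm{id}_D,\ h_C \ot \mathrm{id}_D):\ C_* \ot D_* \ra C'_* \ot D_*.\]
That $f_C \ot \mathrm{id}_D$ and $g_C \ot \mathrm{id}_D$ are chain maps is immediate. For the homotopy identity one expands the differential $\partial_{C \ot D}$ using the Koszul sign rule: the two mixed terms of the form $\pm h_C(c) \ot \partial_D d$ cancel, leaving $(\partial_C h_C + h_C \partial_C)(c) \ot d = (g_C f_C - \mathrm{id})(c) \ot d$, as required. The three side conditions $hh = 0$, $gh = 0$, $fh = 0$ pass to the first tensor factor unchanged. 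Analogously I would construct a ``right'' reduction
\[\r_2 = (\mathrm{id}_{C'} \ot f_D,\ \mathrm{id}_{C'} \ot g_D,\ \mathrm{id}_{C'} \ot h_D):\ C'_* \ot D_* \ra C'_* \ot D'_*,\]
interpreted with the Koszul sign convention, so that $\mathrm{id}_{C'} \ot h_D$ acts on $c' \ot d$ as $(-1)^{|c'|} c' \ot h_D(d)$.

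Finally I would compose $\r_2$ after $\r_1$. A short computation from the reduction axioms shows that the composite of $(f_1, g_1, h_1)$ and $(f_2, g_2, h_2)$ is the triple $(f_2 f_1,\ g_1 g_2,\ h_1 + g_1 h_2 f_1)$; applied to our $\r_1$ and $\r_2$, this yields $f_{C \ot D} = f_C \ot f_D$, $g_{C \ot D} = g_C \ot g_D$, and
\[h_{C \ot D} = h_C \ot \mathrm{id}_D + (g_C f_C) \ot h_D.\]
The only mildly delicate part of this program is the sign bookkeeping in the construction of $\r_1$ and $\r_2$; the rest is a routine bilinear check, and no step poses a genuine obstacle.
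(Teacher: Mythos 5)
Your proposal is correct and lands on exactly the construction the paper uses: the paper's proof simply writes down $f_C \ot f_D$, $g_C \ot g_D$ and $h_C \ot \mathrm{id}_D + g_C f_C \ot h_D$ (together with the variant $h_C\ot g_Df_D + \mathrm{id}_C\ot h_D$, which is what you would get by composing your two one-sided reductions in the other order). Your factorization through $C'_*\ot D_*$ and the composition formula $(f_2f_1,\ g_1g_2,\ h_1+g_1h_2f_1)$ is just an explicit derivation of those same formulas, so the approach is essentially identical.
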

\begin{proof}
 The new reduction is defined by
$f_{C \ot D} = f_C \ot f_D$, $g_{C \ot D} = g_{C} \ot g_{D}$, 
$h_{C \ot D} = h_C \ot id_D  +  g_C f_C \ot h_D$,  or $h_{C \ot D} = h_C \ot g_D f_D +  id_C \ot h_D.$
\end{proof}

Further we will deal only with chain complexes which are formed by free abelian groups.
For any simplicial set $X$ there is a canonically associated chain complex $C_{*}(X)$ where the group $C_n (X)$ is freely generated by nondegenerate ${n}$--simplices of
$X$ and the boundary homomorphism $\di_n$ is induced by face maps in $X_n$ as follows 
\[
\di_n = \displaystyle\sum\limits_{i=0}^{n} (-1)^{n} d_i. 
\]
Let $(C_* , \di)$ be a chain complex. A collection of maps $\d_n : C_n \to C_{n-1}$ is called a \emph{perturbation} if $(\di_n +\d_n)^2 = 0$ for all $n\in \mathbb{N}$.
We will now introduce the Basic Perturbation Lemma. It is a powerful tool that enables us to construct
new reductions.
\begin{prop}[Basic Perturbation Lemma, \cite{shih}] \label{bpl}
Let $\rho = (f, g, h): (C_*, \di)\ra (D_*, \di' )$ be a reduction and let $\d$  be a perturbation of the differential $\di$. 
If for every $c\in C_n$ there exists an $\alpha \in \N$ such that
$(h \d )^{\alpha} (c)= 0$, then there is a reduction  $$\rho' = (f', g', h'): (C_*, \di + \d)\ra (D_*, \di' + \d ' ), $$ 
where $\d '$ is a perturbation of the differential $\di'$.
\end{prop}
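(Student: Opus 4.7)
The plan is to build all of the new data $(f', g', h', \d')$ out of the original reduction together with a single auxiliary ``summing'' operator. Concretely, introduce the formal series
\[
\Sigma \;=\; \sum_{n \geq 0} (-h\d)^n \quad \text{on } C_*,
\qquad
\Sigma' \;=\; \sum_{n \geq 0} (-\d h)^n \quad \text{on } C_*.
\]
The local nilpotency hypothesis $(h\d)^{\alpha}(c) = 0$ makes $\Sigma$ well-defined pointwise, and a short argument---using $(\d h)^{n+1}(c) = \d(h\d)^n(hc)$ and applying the hypothesis to $hc$---shows $\Sigma'$ is also well-defined on every chain. By construction one then has the resolvent identities $\Sigma\co(\mathrm{id}+h\d) = (\mathrm{id}+h\d)\co\Sigma = \mathrm{id}$ and analogously for $\Sigma'$.

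With this in hand, the natural definitions are
\[
f' \;=\; f\co\Sigma', \qquad g' \;=\; \Sigma\co g, \qquad h' \;=\; \Sigma\co h \;=\; h\co\Sigma', \qquad \d' \;=\; f\co\d\co\Sigma\co g,
\]
the equality $\Sigma\co h = h\co\Sigma'$ being a formal consequence of a telescoping in the series. Everything that remains is verification, which I would carry out in the following order: (i) $\d'$ is a perturbation of $\di'$, i.e.\ $(\di' + \d')^2 = 0$; (ii) $f'$ and $g'$ are chain maps with respect to the perturbed differentials $\di + \d$ and $\di' + \d'$; (iii) the side conditions $h'h' = 0$, $f'h' = 0$, $g'h' = 0$; and (iv) the homotopy identity $g'f' - \mathrm{id} = (\di + \d)\co h' + h'\co(\di + \d)$.

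Each of these four checks has the same flavour: expand $\Sigma$ and $\Sigma'$, feed the original reduction identities $gf - \mathrm{id} = \di h + h\di$, $fh = gh = hh = 0$, $f\di = \di' f$, $g\di' = \di g$ through the series one term at a time, and watch two adjacent powers of $h\d$ in $\Sigma$ cancel via the resolvent identity. Along the way the operator $\d'$ can be rewritten in the equivalent forms $f'\d g = f\d g'$, and these identifications are what make (i) and (ii) fall out.

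The main obstacle is purely bookkeeping: once $\Sigma$ and $\Sigma'$ are multiplied out, a substantial number of composite terms appear, and one has to identify the correct telescoping cancellations. The local nilpotency hypothesis itself plays no conceptual role in this bookkeeping---it is invoked only once, at the outset, to guarantee that all the formally infinite sums are actually finite on every given chain. Once finiteness is secured, the remainder is a finite, if tedious, verification in the associative algebra of operators generated by $f$, $g$, $h$, $\di$ and $\d$.
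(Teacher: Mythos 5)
Your overall strategy is exactly the one the paper (following Shih and Rubio--Sergeraert) uses: write the perturbed reduction data as geometric series in $h\delta$ and $\delta h$, note that local nilpotency makes these series pointwise finite, and then verify the reduction identities by direct operator computation. The paper's own ``proof'' does no more than record the resulting formulas and cite \cite{serg}, Theorem 50, so in outline you are on the same track, and your observation that well-definedness of $\Sigma'$ follows from $(\delta h)^{n+1}(c)=\delta\,(h\delta)^{n}(hc)$ is correct.

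However, your signs are wrong for this paper's conventions, and the verification you defer would fail. The paper defines a reduction by $gf-\mathrm{id}=\partial h+h\partial$ (rather than $\mathrm{id}-gf=\partial h+h\partial$), and with that convention the correct summing operators are the \emph{non-alternating} series $\sum_{n\ge 0}(h\delta)^n$ and $\sum_{n\ge 0}(\delta h)^n$, which are exactly the formulas recorded in the paper; your alternating series $\sum_{n\ge0}(-h\delta)^n$, $\sum_{n\ge0}(-\delta h)^n$ are the ones adapted to the opposite sign convention. Concretely, write $f'=f(1+\epsilon\,\delta h+\cdots)$, $g'=(1+\epsilon\, h\delta+\cdots)g$, $h'=(1+\epsilon\, h\delta+\cdots)h$ and expand your step (iv) to first order in $\delta$; using $gf=\mathrm{id}+\partial h+h\partial$ and the fact that $\partial\delta+\delta\partial=-\delta^2$ is of second order, one finds
\[
g'f'-\mathrm{id}-\bigl((\partial+\delta)h'+h'(\partial+\delta)\bigr)=(\epsilon-1)(h\delta+\delta h)+O(\delta^2),
\]
which forces $\epsilon=+1$. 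With your choice $\epsilon=-1$ the homotopy identity already fails at first order, so the triple you define is not a reduction for this paper's definition. The fix is purely cosmetic---drop the minus signs, or equivalently replace $h$ by $-h$ throughout---but as written the construction is incorrect. (A minor additional point: both your argument and the paper's need the identity $fg=\mathrm{id}_{D_*}$, which is part of the definition of reduction in \cite{serg} but is omitted from the list of identities in Section 2.)
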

\begin{proof}
The maps involved in the reduction $\rho '$ are given explicitely as follows:
\[ \begin{array}{ll}

f' &= f\co (1 + (\d h) + (\d h)^2 + (\d h)^3 + \ldots), \\
g' &= (1 +(h \d) + (h \d)^2 + (h \d)^3 + \ldots) \co  g, \\
{h}' &= (1 +(h \d) + (h \d)^2 + (h \d)^3 + \ldots) \co  h =  h\co (1 + (\d h) + (\d h)^2 + (\d h)^3 + \ldots), \\
\d '  &= f \co \d \co (1 + (h \d) + (h \d)^2 + (h \d)^3 + \ldots) \co  g.
\end{array} \]

The proof can be found in \cite{serg}, Theorem 50.
\end{proof}
On the other hand, if we add a perturbation to the differential of the other chain complex, we easily get the following result:
\begin{lem}[Easy Perturbation Lemma] \label{epl}
Let $\rho = (f, g, h): (C_*, \di)\ra (D_*, \di' )$ be a reduction and let $\d'$  be a perturbation of the differential $\di'$. Then there is a reducion
$\rho = (f, g, h): (C_*, \di + \d)\ra (D_*, \di' + \d')$, where
$\d = g\d'f$.
\end{lem}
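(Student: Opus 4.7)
The plan is to verify directly the defining relations of a reduction for the same triple $(f,g,h)$, but now with the perturbed differentials $\partial+\delta$ on $C_*$ and $\partial'+\delta'$ on $D_*$. The side conditions $hh=0$, $hg=0$, $fh=0$ (together with $fg=\mathrm{id}_{D_*}$) involve only $f,g,h$ and not the differentials, so they pass from $\rho$ to the proposed new reduction with no work. What remains to check is: (a) $\delta=g\delta'f$ is genuinely a perturbation of $\partial$; (b) $f$ and $g$ are chain maps with respect to the new differentials; (c) the homotopy identity $gf-\mathrm{id}_{C_*}=(\partial+\delta)h+h(\partial+\delta)$ still holds.

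For (a), I would expand $(\partial+\delta)^2$ and collect the three new terms $\partial\delta+\delta\partial+\delta^2$. Using $\partial g=g\partial'$ and $f\partial=\partial'f$ (from $\rho$) together with $fg=\mathrm{id}_{D_*}$, each term can be sandwiched as $g(\cdot)f$, so that the sum rearranges to $g\bigl((\partial'+\delta')^2-\partial'^2\bigr)f$. This vanishes because $\delta'$ is already a perturbation of $\partial'$.

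Step (b) reduces to one short calculation in each direction: $f(\partial+\delta)=f\partial+fg\delta'f=(\partial'+\delta')f$ and $(\partial+\delta)g=\partial g+g\delta'fg=g(\partial'+\delta')$, each line using $fg=\mathrm{id}_{D_*}$ and the original chain-map property. Step (c) comes down to observing that the extra cross-terms in $(\partial+\delta)h+h(\partial+\delta)$ vanish: $\delta h=g\delta'(fh)=0$ by $fh=0$, and $h\delta=(hg)\delta'f=0$ by $hg=0$, so only the unperturbed summand $\partial h+h\partial=gf-\mathrm{id}_{C_*}$ survives.

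There is no real obstacle to this verification; the substantive content of the lemma is the observation that the sandwich form $\delta=g\delta'f$ is engineered precisely so that the side conditions of $\rho$ absorb every new cross-term a perturbation of $\partial'$ would otherwise introduce, while $fg=\mathrm{id}_{D_*}$ simultaneously controls the quadratic term $\delta^2$. This is exactly what makes the ``easy'' side of perturbation theory trivial, in contrast to Proposition \ref{bpl}, where a geometric series in $h\delta$ is needed.
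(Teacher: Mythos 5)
Your verification is correct and is precisely the routine check the paper leaves unwritten (Lemma~\ref{epl} is stated without proof as an ``easy'' consequence): the side conditions kill the cross-terms $\delta h$ and $h\delta$, while $fg=\mathrm{id}_{D_*}$ controls $\delta^2=g\delta'(fg)\delta'f$ and the two chain-map identities. The only point worth flagging is that your steps (a) and (b) genuinely need $fg=\mathrm{id}_{D_*}$, which the paper's displayed list of reduction axioms omits but which is part of the cited Definition~42 of \cite{serg} (likewise its condition ``$gh=0$'' only typechecks as $hg=0$, which is how you read it); with that understood, nothing is missing.
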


The difficulty with the BPL consists in the fact that it is sometimes difficult to verify the nilpotency assumption.
Instead of looking for a description of $(h\d)$ we can find a filtration and check how how the perturbation $\d$ changes the 
filtration.

\begin{Def}
Let $B$ and $F$ be simplicial sets and let $E = F\times B$. Let $(y, b) \in E$. We may assume $b = s_{*} b' \in B$, 
where $s_{*}$ is a composition of degeneracy operators and $b'$ is nondegenerate.
The filtration degree of $(y, b)$ is the dimension of $b'$.
The filtration degree of an nonzero element $y\ot b \in \cf \ot \cb$ is the dimension of $b$.
\end{Def}

\section{twisting cochains}
The twisted cartesian product (TCP) is defined as follows:
\begin{Def} \label{dfn1}
Let $B,F$ be simplicial sets and $G$ a simplicial group with a right action $\cdot: F\times G \to F$. 
A function $\t_n: B_n \to G_{n-1}, n \geq 1$, is said to be a \emph{twisting operator}, if it satisfies the following properties:
\begin{enumerate}
 \item $d_0 \t(b) = \tau(d_1 b) \cdot \tau(d_0 b)^{-1}$,
 \item $d_i \t(b) = \tau(d_{i+1} b) , i > 0 $, 
 \item $s_i \t(b) = \tau(s_{i+1} b) , i \geq 0$, 
 \item $\t(s_0 b) = e_m$, if $b\in B_{m+1}$ where $e_m$ is the unit element of $G_m$.
\end{enumerate}
The twisted cartesian product with the base $B$, the fiber $F$ and the group $G$ is a simplicial set denoted $E$ or $F \times_{\tau} B$ where 
$E_n = F_n \times B_n$ has the following face and degeneracy operators:
\begin{enumerate}
 \item $d_0 (y,b) = (d_0 (y) \cdot \t(b), d_0 (b))$, 
 \item $d_i (y,b) = (d_i (y), d_i (b)), \, i>0$,
 \item $s_i (y,b) = (s_i (y), s_i (b)), \, i \geq 0$.
\end{enumerate}
\end{Def}
The face and degeneracy operations on $E$ naturally define a differential $\di_\t$ on the chain complex $C_* (E)$. Note that $\di_\t (y_0, b_0) = 0$ for 
$( y_0, b_0) \in F_0 \times_\tau B_0$ since $d_0 (y_0, b_0)$ is not defined.

We now introduce the following notation: If $X$ is a simplicial set 
and $x\in X_n$ we put ${\tilde{d}}^{n-i} x = d_{i+1}\cdots d_n x$ and ${\tilde{d}}^{0} x = x$.
Given $(x,y) \in (X\times Y)_n$ we define the Alexander-Whitney operator:
\[
\AW (x,y) = \displaystyle\sum\limits_{i=0}^{n} \tilde{d}^{n-i} x \ot {d_0}^{i} y.
\]
For a non--twisted product $F\times B$, there exists a reduction 
$$(\AW, \EML, \SH): (C(F\times B), \di) \ra (\cf \ot \cb, \dif),$$
known as the Eilenberg--Zilber reduction. For the full description of the reduction see \cite{eml}.

The only difference between the chain complexes ${(C_* (F \times_{\tau} B), \di_\t)}$ and ${(C_* (F\times B), \di)}$ is in their differentials and it is easy to see that
$$
\di_\t = \di  + (d_0 (y) \cdot \t(b),  d_0 (b)) - (d_0 (y),  d_0 (b)).
$$
So the differential $\di_\t$ of $\ce$ is just the $\di$ with the added perturbation 
\begin{equation*}
\d_\t = (d_0 (y) \cdot \t(b),  d_0 (b)) - (d_0 (y),  d_0 (b)).
\end{equation*}
\begin{prop}[\cite{serg}, Theorem 131]\label{TWEZ}
Let $F\times_\t B$ be a twisted product of simplicial sets. Then the Basic Perturbation Lemma can be applied to the reduction data
$(\AW, \EML, \SH): C_* (F\times B, \di) \ra (\cf \ot \cb, \dif)$ to obtain the reduction 
\[
(f,g,h): (C_* (F \times_\t B), \di_\t) \ra (\cf \oth \cb, \df),
\]
where $\cf \oth \cb$ is just $\cf \ot \cb$ with a new differential $\df$. 
\end{prop}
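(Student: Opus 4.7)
The plan is to invoke the Basic Perturbation Lemma (Proposition \ref{bpl}) with the initial reduction $(\AW,\EML,\SH):(C_*(F\times B),\di)\ra(\cf\ot\cb,\dif)$ and with the perturbation $\d_\t$ of $\di$. Once the hypotheses of the BPL are verified, the triple $(f,g,h)$ and the perturbed differential $\df$ on $\cf\oth\cb$ are read off directly from the formulas listed in Proposition \ref{bpl}, so no further construction is required.

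Since the identity $\di_\t = \di + \d_\t$ is already recorded in the excerpt, the only substantive task is to check the local nilpotency hypothesis of the BPL: for every chain $c\in C_n(F\times B)$ there must exist $\alpha\in\N$ with $(\SH\circ\d_\t)^\alpha(c) = 0$. A plain degree count is inconclusive, since $\d_\t$ lowers simplicial dimension by one while $\SH$ raises it by one, so I would argue using the base-dimension filtration introduced in the definition above. Writing $\Phi_p$ for the subgroup spanned by pairs $(y,b)$ of base filtration degree at most $p$, the strategy is to show that $\d_\t$ maps $\Phi_p$ into $\Phi_{p-1}$; the shuffle homotopy $\SH$ preserves the filtration because its summands only insert degeneracies and therefore leave the non-degenerate base representative unchanged, so $\SH\circ\d_\t$ also lands in $\Phi_{p-1}$. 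Iterating at most $p+1$ times on any chain in $\Phi_p$ then yields zero.

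The main obstacle --- and the only non-formal step --- is the strict filtration drop of $\d_\t$, which I would verify by splitting on the canonical degeneracy form $b = s_{j_1}\cdots s_{j_k} b'$ with $0\le j_1 < \cdots < j_k$ and $b'$ non-degenerate of dimension $p$. If $j_1 = 0$, so that $b$ begins with $s_0$, then property (4) of the twisting operator gives $\t(b) = e$, and the two summands of $\d_\t(y,b) = (d_0 y\cdot\t(b), d_0 b) - (d_0 y, d_0 b)$ cancel identically. Otherwise all $j_i \ge 1$, and the simplicial identities $d_0 s_i = s_{i-1} d_0$ allow $d_0$ to commute past every degeneracy in $b$ and land on $d_0 b'$, which is a face of $b'$ of dimension $p-1$; the non-degenerate base representative of $\d_\t(y,b)$ thus has dimension strictly less than $p$. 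In either case $\d_\t(y,b) \in \Phi_{p-1}$, and the BPL then applies verbatim to produce the claimed reduction $(f,g,h):(\ce,\di_\t)\ra(\cf\oth\cb,\df)$.
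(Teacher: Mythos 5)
The paper itself gives no proof of this proposition---it is quoted verbatim from \cite{serg}, Theorem 131---so there is nothing internal to compare against; your argument is, however, exactly the standard proof from that source, and it is essentially correct. You identify $\di_\t = \di + \d_\t$, filter $C_*(F\times B)$ by the dimension of the nondegenerate representative of the base component, show that $\d_\t$ strictly lowers this filtration via the right dichotomy (if $b$ lies in the image of $s_0$ then property (4) of the twisting operator makes the two summands of $\d_\t(y,b)$ cancel; otherwise every degeneracy index is $\geq 1$, so $d_0$ commutes past all of them onto the nondegenerate representative $b'$ and drops its dimension), and conclude local nilpotency of $\SH\co\d_\t$, whence the BPL applies. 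The one imprecise step is your justification that $\SH$ preserves the filtration: the summands of the Eilenberg--Zilber homotopy do not merely insert degeneracies into the base factor---they also apply iterated front faces $d_0^{\,r}$ to it---so the nondegenerate base representative is not ``left unchanged''; its dimension can only stay the same or decrease. That weaker statement (filtration not increased) is all your argument actually uses, so the conclusion stands, but the reason as written misdescribes $\SH$.
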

According to \cite{shih}, the perturbation $\df - \dif$ can be seen as a cap product with so called twisting cochain, which is induced by $\t$. We will now give definitions of those notions.

Let $t:C_{*} (B) \to C_{*-1}(G)$ be a sequence of abelian group homomorphisms $t_{n}: C(B)_n \to C(G)_{n-1}$. 
We define a few operators that will be used within the construction:
$$
D = \AW \co C_* (\D): C_* (B) \to \cb \ot \cb,
$$
where $C_*(\D)$ is induced by the diagonal map $\D: B \to B \times B$ and
$$\sigma = C(\cdot) \co \EML : \cf \ot \cg \to \cf.$$
Finally, we define the cap product $(t\cap\,):C_* (F)\ot C_* (B) \to C_* (F)\ot C_* (B)$ as a composition 
$$({\sigma} \ot 1)(1\ot t \ot 1)(1 \ot D).$$
Observe, that the cap product is a homomorphism of graded abelian groups and not of chain complexes. 
We say that $t$ is a \emph{twisting cochain} if 
$$
(\dif + (t\cap))^2 = \dif (t\cap) + (t\cap) \dif  + (t\cap)(t\cap) = 0.
$$

We saw  that the twisting operator $\tau$ induces via the BPL a new differential $\df$ on the 
chain complex $\cf \ot \cb$.
Then the same twisting operator $\tau$ (this time seen as a part of the twisted cartesian product $G\times_\t B$) also induces a differential $\dg$ on the 
chain complex $\cg \ot \cb$.

According to \cite{shih}, the twisting operator induces a \emph{twisting cochain} $t:C_{*}(B) \to C_{*-1}(G)$ as follows:
$$
t_n: C_n(B) \xrightarrow{e_0 \ot 1}  C_0 (G) \ot C_n (B) \xrightarrow{ \lambda_0 (\dg - \dig)} C_{n-1} (G) \ot C_0 (B)  \xrightarrow{p} C_{n-1} (G), 
$$
where $e_0$ is the unit element of $G_0$, $\lambda_0$ is a projection on the summand $C_{n-1} (G) \ot C_0 (B)$ of the sum 
$$
(C_* (G) \ot C_* (B))_{n-1} = \displaystyle\sum\limits_{i=0}^{n-1}  C_{n-1-i}(G) \ot C_i (B)
$$
and $p(x \ot b) = (\ep b)x$ where the map $\ep:C_0 (B) \to \mathbb{Z}$ is the augmentation.

The following proposition was formulated and proved by Shih in \cite{shih} and describes the relation between $t$ and $\df$.
\begin{prop}[\cite{shih}, Theorem 2]\label{pshi}
Let $F \times_\t B$ be a TCP and let $t$ be the twisting cochain induced by the differential $\dg$ of the chain complex $\cg \oth \cb$. 
Then $\df - \dif = t\cap.$
\end{prop}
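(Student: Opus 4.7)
The plan is to make both sides of the identity explicit and show they coincide, using the structure of the perturbation $\delta_\tau$ and the filtration by $B$-simplices.

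First, by Proposition \ref{bpl} applied to $\r=(\AW,\EML,\SH)$ with the perturbation $\delta_\tau$, the new differential on $\cf \oth \cb$ satisfies
\[
\df - \dif \;=\; \AW \co \delta_\tau \co \Big(\sum_{k\geq 0}(\SH\,\delta_\tau)^k\Big) \co \EML.
\]
The same formula with $F$ replaced by the group $G$ (acting on itself by right multiplication) describes $\dg - \dig$, so the twisting cochain $t$, obtained from $\dg - \dig$ by the projection $p\co\lambda_0$ onto the $C_{n-1}(G)\ot C_0(B)$-summand, is itself given by an analogous series evaluated at $e_0\ot b$. Both sides of the desired equality are thus already expressed in parallel form, and it remains to identify one with the other.

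Second, I would exploit the fact that $\delta_\tau(y,b) = (d_0 y \cdot \t(b), d_0 b) - (d_0 y, d_0 b)$ uses only the $G$-module structure of $F$. More precisely, $\delta_\tau$ factors through the action: splitting a simplex via an $\AW$-style coproduct on the $B$-factor, one can rewrite $\delta_\tau$ on $\ce$ as the composite of the $G$-valued perturbation (applied with $e_0$ in place of the $G$-coordinate) with the action map $\sigma = C_*(\cdot)\co\EML$ reintroducing the $F$-simplex. Because $\AW$, $\EML$, and $\SH$ are natural with respect to the group action, this factorization passes through every term of the series, so the $F$-side series equals $(\sigma\ot 1)(1\ot(-)\ot 1)(1\ot D)$ applied to the $G$-side series.

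Third, I would identify the extracted $G$-side data with $t$. By construction, $t_n(b)$ is exactly the $C_{n-1}(G)\ot C_0(B)$-component of $(\dg-\dig)(e_0\ot b)$; the remaining components on the $G$-side are recovered from $t$ via the diagonal $D$, because $\SH$ inserts a shuffle of the $B$-coordinate which is precisely what the diagonal approximation $D=\AW\co C_*(\D)$ produces. Combining this with the factorization through $\sigma$ yields $(\sigma \ot 1)(1 \ot t \ot 1)(1\ot D) = t\cap$, as required. The main obstacle is the bookkeeping in the second step: one must verify that the shuffle homotopy $\SH$ interacts with the iterated $\delta_\tau$ in a way that matches the diagonal $D$, which requires both the cocycle identities for $\t$ from Definition \ref{dfn1} and an induction on the filtration degree of $b$ to control the otherwise infinite series.
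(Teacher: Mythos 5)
The paper does not actually prove this proposition; it quotes it from Shih (\cite{shih}, Theorem~2), so there is no internal argument to compare yours with. Judged on its own, your proposal is a reasonable sketch of the known strategy, but it has two genuine gaps, and they sit exactly where the content of Shih's theorem lives. First, in your second step, the claim that the factorization of $\d_\t$ through the action ``passes through every term of the series'' does not follow from naturality of $\AW$, $\EML$, $\SH$ alone: $\d_\t$ involves $d_0y$ as well as $\t(b)$, and the interaction of the shuffle homotopy $\SH$ with the iterated perturbation in $\AW\co\d_\t\co(\SH\,\d_\t)^k\co\EML$ is precisely the combinatorial heart of the theorem. You acknowledge this in your closing sentence but do not carry out the verification, so what you have is a statement of the difficulty rather than a resolution of it. Second, and more structurally, your third step is circular as written: by construction $t_n(b)$ records only the $C_{n-1}(G)\ot C_0(B)$--component of $(\dg-\dig)(e_0\ot b)$, so the assertion that ``the remaining components on the $G$-side are recovered from $t$ via the diagonal $D$'' is exactly the proposition in the universal case $F=G$ (with $G$ acting on itself by right translation). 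Unless that special case is established first by an independent computation, you are assuming the theorem to prove it; once it is established, the passage to general $F$ by naturality under the action map is indeed the easy part.

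To close the gaps you would need either Shih's direct computation --- an induction on the filtration degree of $b$, using the identities for $\t$ in Definition~\ref{dfn1} and the fact that both $\df-\dif$ and $t\cap$ strictly lower the filtration (Corollary~\ref{lemmo}), so that the series and the induction both terminate --- or a uniqueness argument showing that any two perturbations of $\dif$ that arise from the same twisting data and lower the filtration must agree. Either route is substantial; neither is present in your proposal.
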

Let $E = F \times_\t B$ be a twisted product of simplicial sets, $t$ be a twisting cochain induced by the differential $\dg$ 
on the chain complex $\cg \oth \cb$ and $b\in B_n, y \in F_k$. Then using the definition of $\AW$ and $t \cap$ together with the 
fact that $t(\tilde{d}^{n} b) = 0$ we obtain the following formula:
\begin{equation}\label{eq}
t\cap(y \ot b) = (-1)^{k}  \sigma (y \ot t(\tilde{d}^{n-1} b)) \ot d_0 b + \displaystyle\sum\limits_{i=2}^{n} (-1)^{k} \sigma (y \ot t(\tilde{d}^{n-i} b)) \ot {d_0}^{i} b.
\end{equation}

Using this formula we can summarize some properties of $t \cap$. 
\begin{cor}[\cite{stasheff}, Lemma 3.4]\label{lemmo}
Let $E = F \times_\t B$ be a twisted product of simplicial sets and let $t$ be a twisting cochain induced by the differential $\dg$ 
on the chain complex $\cg \oth \cb$. Then the following holds:
\begin{enumerate}
 \item The perturbation $(t\cap): {\cf \ot \cb} \to {\cf \ot \cb}$ lowers the filtration degree by at least one.
 \item If for all $b \in B_1, t(b) = 0$, then the perturbation $(t\cap)$ lowers the filtration degree by at least two.
\end{enumerate}
\end{cor}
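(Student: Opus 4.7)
The plan is to read off both statements directly from the explicit formula~\eqref{eq}, reducing everything to a bookkeeping argument about the base factor $b$. Since the filtration degree of an element $y \otimes b \in \cf \otimes \cb$ equals the dimension of $b$, and since $\cb$ is generated by nondegenerate simplices, any nonzero element lying in $C_m(B)$ has filtration degree exactly $m$. So to track the drop in filtration degree, I only need to inspect which summand $d_0^{i} b$ appears as the base factor on the right-hand side of~\eqref{eq}.

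For part~(1), I would start with $y \otimes b$ where $b \in B_n$, so the filtration degree is $n$. Formula~\eqref{eq} expresses $t \cap (y \otimes b)$ as a sum of terms whose base factor is $d_0^{i} b \in C_{n-i}(B)$ for some $i \geq 1$ (the first summand having $i=1$, the remaining summand ranging over $i = 2, \ldots, n$). Therefore every term of $t \cap (y \otimes b)$ has filtration degree at most $n-i \leq n-1$, which gives the claimed drop by at least one.

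For part~(2), observe that under the hypothesis $t(b) = 0$ for every $b \in B_1$, linearity of $t$ forces $t$ to vanish on the whole of $C_1(B)$. The $i=1$ summand in~\eqref{eq} is $(-1)^k \sigma(y \otimes t(\tilde{d}^{n-1} b)) \otimes d_0 b$, with $\tilde{d}^{n-1} b \in C_1(B)$; hence this summand is zero. The remaining summation runs over $i \geq 2$, so each surviving term has base factor in $C_{n-i}(B)$ with $n - i \leq n - 2$, yielding a drop of at least two.

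There is no real obstacle here beyond being careful with the conventions: I should make explicit that $\cb$ is the nondegenerate (normalized) chain complex, so that filtration degree coincides with chain degree, and that $t$ is extended linearly from $B_1$ to $C_1(B)$, so that vanishing on basis elements forces vanishing on $\tilde{d}^{n-1} b$. Once these are spelled out, both statements are immediate consequences of formula~\eqref{eq}.
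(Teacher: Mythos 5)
Your proposal is correct and follows essentially the same route as the paper: part (1) is read off directly from formula (\ref{eq}) since every base factor is $d_0^i b$ with $i\geq 1$, and part (2) uses that $\tilde{d}^{n-1}b\in B_1$, so the hypothesis kills the $i=1$ summand and only terms with $i\geq 2$ survive. The extra care you take with linearity of $t$ and with the normalized chain complex convention is a harmless elaboration of the paper's one-line argument.
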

\begin{proof}
The first part is clear by the formula (\ref{eq}). If $t(\tilde{d}^{n-1} b) = 0$ for all $b \in B_n$, then 
\[
t\cap(y \ot b) = \displaystyle\sum\limits_{i=2}^{n} (-1)^{k} \sigma (y \ot t(\tilde{d}^{n-i} b)) \ot {d_0}^{i} b. 
\]
which proves the second part.
\end{proof}
\section{Effective chain complex for twisted product}
We would like to find an answer to the following problem: Let $B$ and $F$ be simplicial sets, $G$ a simplicial group, $E = F\times_\t B$ a TCP, and $\r_B :C_{*}(B) \ra EC_{*}(B), \r_F :C_{*}(F) \ra EC_{*}(F)$ be 
reductions to effective chain complexes. Is there a reduction of the chain complex $C_{*}(E)$ to an effective chain complex which can be 
obtained from $\r_B, \r_F$ and $\tau$ by the application of the Basic Perturbation Lemma?

Our aim is to find an answer using the composition of given reductions. Having reductions $\r_B, \r_F$ we can by the Lemma \ref{tens} construct the reduction 
$$
\r_{F \ot B}: C_{*}(F) \ot C_{*}(B) \ra EC_{*}(F) \ot EC_{*}(B).
$$
We know that the chain homotopy $h_{F \ot B}$ from the reduction $\r_{F \ot B}$ raises the filtration degree by at most $1$. 
This follows from the fact that $h_{B}$ raises the filtration degree by at most 1 and the proof of Lemma \ref{tens}. 
We can use the BPL to construct a reduction ${\r_{E} = (f, g, h):C_{*}(E) \ra C_{*}(F) \oth C_{*}(B)}$.
From Corollary \ref{lemmo}, the perturbation operator $\df -\dif  = t\cap$ lowers the filtration degree by at least one.
If the composition $h_{F\ot B} \co (\df-\dif)$ decreased the filtration, it would be nilpotent and hence we could use the BPL 
on the reduction data $\r_{F \ot B}$ and the perturbation $\df-\dif$ to get a reduction 
$$
\r_{t}: C_{*}(F) \oth C_{*}(B) \ra EC_{*}(F) \oth EC_{*}(B)
$$
to an effective chain complex $EC_{*}(F) \oth EC_{*}(B)$ which is $EC_{*}(F) \ot EC_{*}(B)$ with a new differential obtained from the BPL.
However, in full generality $h_{F\ot B} \co (\df-\dif) = h_{F\ot B} \co (t\cap)$ preserves the filtration degree.

From (\ref{eq}) we see that in the composition $h_{F \ot B} \co (t\cap) (y\ot b)$, where $b\in B_n$, there is only one element with the filtration degree $n$, namely
\begin{equation}\label{eq2}
g_F f_F \sigma (y \ot t(\tilde{d}^{n-1} b)) \ot h_B d_0 b 
\end{equation}
and the degree $n$ element in $(h_{F \ot B} \co (t\cap))^i (y\ot b)$ is $y_i \ot b_i$ where
\[
\begin{array}{ll}
b_0 = b, & b_{i+1} = h_B d_0 b_i = {(h_B d_0)}^{i} b, \\
y_0 = y,& y_{i+1} = g_F f_F \sigma(y_i \ot t(\tilde{d}^{n-1} b_i )).
\end{array}
\]
Now we can establish conditions for $(h_{F \ot B} \co (t\cap))^i$ to decrease the filtration and prove the following theorem.
\begin{thm}\label{thm1}
Let $B$ and $F$ be simplicial sets, $G$ a simplicial group with an action on $F$, $E = F\times_\t B$ a TCP,  
and $\r_B :C_{*}(B) \ra EC_{*}(B), \r_F :C_{*}(F) \ra EC_{*}(F)$ be reductions to effective chain complexes. 

If for all $n\in \mathbb{N}, b \in B_n, y\ot b \in C_* (F) \ot C_* (B)$, there exists  $i\in \mathbb{N}$ such that $(h_B d_0)^i b = 0$ (thus $h_B d_0$ is nilpotent) or $y_i = 0$, then
there is a reduction from the chain complex $C_{*}(E)$ to an effective chain complex $EC_{*}(F) \oth EC_{*}(B)$ 
which can be obtained from $\r_B, \r_F$ and $\tau$ by the application of the Basic Perturbation Lemma.
\end{thm}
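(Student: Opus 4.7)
The plan is to construct the desired reduction as a composition of two reductions. The first one is already available: Proposition \ref{TWEZ} provides $\r_E:(C_*(E),\di_\t) \ra (\cf \oth \cb, \df)$. What remains to construct is a second reduction $\r_t:(\cf \oth \cb, \df) \ra EC_*(F) \oth EC_*(B)$ whose target is an effective chain complex, and the theorem will then follow by composing the two, since a composition of reductions is again a reduction.

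To build $\r_t$ I would start from the tensor product reduction $\r_{F\ot B}:\cf \ot \cb \ra EC_*(F) \ot EC_*(B)$ of Lemma \ref{tens} (both sides carrying their usual tensor product differential) and regard $\df$ as the perturbation $\dif + (t\cap)$ of $\dif$; this identification is exactly the content of Proposition \ref{pshi}. The target will remain effective because a tensor product of finitely generated free abelian groups is finitely generated free. To apply the Basic Perturbation Lemma (Proposition \ref{bpl}) to the pair $(\r_{F\ot B}, t\cap)$, the only thing I still have to verify is the nilpotency hypothesis: for every elementary tensor $y \ot b$ there must exist $\alpha \in \N$ with $(h_{F\ot B}\co(t\cap))^\alpha (y\ot b)=0$.

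This nilpotency I would establish by induction on the filtration degree $n$ of $b$, i.e.\ on the dimension of the nondegenerate simplex underlying $b$. The base case $n=0$ is immediate from formula (\ref{eq}): the sum defining $t\cap(y\ot b)$ is empty for $b\in B_0$, hence $t\cap$ vanishes on $\cf\ot C_0(B)$. For the inductive step I would exploit the computation isolated just before the theorem: $(h_{F\ot B}\co(t\cap))^i (y\ot b)$ decomposes uniquely into one top-filtration summand $y_i\ot b_i$, produced by the displayed recursion, plus a remainder each of whose summands has filtration degree strictly less than $n$. The hypothesis supplies an $i_0$ for which either $y_{i_0}=0$ or $b_{i_0}=0$, so after $i_0$ iterations only the lower-filtration remainder survives; this remainder is a finite sum of elementary tensors $y'\ot b'$ with $b'$ of filtration strictly less than $n$, and by the inductive hypothesis each such summand is annihilated after finitely many further iterations. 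Taking the maximum of these exponents yields the required $\alpha$.

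The main obstacle I expect lies in the bookkeeping of the inductive step: one must identify precisely which lower-filtration elementary tensors appear in the residual sum after $i_0$ iterations and confirm that the inductive hypothesis applies to each of them. The universal quantification in the theorem's hypothesis ``for all $n\in\N$, $b\in B_n$, $y\ot b\in\cf\ot\cb$'' is exactly what makes the induction legitimate, since the sequences $y_i, b_i$ entering the hypothesis depend on both factors via the recursion. Once $\r_t$ is obtained via the BPL, composing it with $\r_E$ yields the desired reduction $C_*(E) \ra EC_*(F)\oth EC_*(B)$ and completes the proof.
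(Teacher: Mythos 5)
Your proposal is correct and follows essentially the same route as the paper: compose the reduction $C_*(E)\ra C_*(F)\oth C_*(B)$ from Proposition \ref{TWEZ} with a BPL application to $\r_{F\ot B}$ perturbed by $t\cap$, checking nilpotency via the top-filtration term $y_i\ot b_i$. The paper leaves the induction on filtration degree implicit in the discussion preceding the theorem, whereas you spell it out, but the argument is the same.
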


\begin{cor}\label{mycoro}
If $G$ is $0$--reduced or $\r_B$ is trivial (i.e. $f_B = g_B = id, h_B = 0$), $C_* (E)$ can be reduced to an effective chain complex using the BPL.
\end{cor}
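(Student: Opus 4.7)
The plan is to deduce the corollary from Theorem~\ref{thm1} by checking that its hypothesis holds in each of the two stated situations. Recall from equation~(\ref{eq2}) and the discussion preceding Theorem~\ref{thm1} that it suffices, for every $y\ot b$ with $b\in B_n$, to exhibit an index $i$ such that either $b_i=(h_Bd_0)^i b=0$ or $y_i=0$, where $y_{i+1}=g_Ff_F\sigma(y_i\ot t(\tilde{d}^{n-1}b_i))$.

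The case when $\rho_B$ is trivial is immediate: if $h_B=0$ then $b_1=h_Bd_0b=0$ for every $b$, so the hypothesis is satisfied with $i=1$ regardless of $y$.

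For the case when $G$ is $0$-reduced, the strategy is to first prove the auxiliary vanishing $t(b')=0$ for every $b'\in B_1$, which is exactly the condition appearing in part~(2) of Corollary~\ref{lemmo}. Granted this, for any $b\in B_n$ the face $\tilde{d}^{n-1}b$ lies in $C_1(B)$, hence $t(\tilde{d}^{n-1}b)=0$, and therefore $y_1=g_Ff_F\sigma(y\ot 0)=0$, again giving $i=1$. To establish the auxiliary claim I would unwind the definition of $t_1$ from the paragraph preceding Proposition~\ref{pshi}: on $b'\in B_1$ we form $e_0\ot b'$ and apply the BPL expression for the perturbation $\dg-\dig$ attached to the Eilenberg--Zilber reduction. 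After $\EML$ we obtain $(s_0e_0,b')$, and the simplicial perturbation $\d_\t$ from Definition~\ref{dfn1} sends this to $(e_0\cdot\t(b')-e_0,\,d_0b')$. Since $G$ is $0$-reduced, $\t(b')\in G_0=\{e_0\}$, so this leading term vanishes; every further summand in the BPL series begins with $\SH$ applied to this zero, so it vanishes as well. Projecting through $\lambda_0$ and $p$ then yields $t_1(b')=0$.

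The one step I expect to require genuine care is the vanishing of $t$ on $C_1(B)$ under $0$-reducedness, since it involves tracing the BPL formula for the induced twisting cochain; the remainder of the argument is a direct substitution into Theorem~\ref{thm1} in either case.
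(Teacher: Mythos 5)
Your proposal is correct and follows essentially the same route as the paper: the trivial-$\r_B$ case via $b_1=h_Bd_0b=0$, and the $0$--reduced case by unwinding the BPL formula for $t_1$ on $B_1$ to conclude $y_1=0$ and invoking Theorem~\ref{thm1}. The only (immaterial) difference is that you let the $0$--reducedness kill the term $\d_\t\EML(e_0\ot b')$ at the start of the series, whereas the paper first derives the general formula $t(b)=\t(b)-e_0$ and then specializes.
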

\begin{proof}
If the reduction $\r_B$ is trivial, then the chain homotopy $h_B$ is trivial, so $h_B = 0$ and hence $b_1 = h_B d_0 = 0$.
To prove the case when $G$ is $0$--reduced
we compute $t(b)$ where $b\in B_1$. According to the definition we get 
\[
t(b) = t_1(b) = p \lambda_0 (\dg - \dig)(e_0 \ot b).
\]
From the Basic Perturbation Lemma we get
\[
\begin{array}{ll}
(\dg - \dig)(e_0 \ot b) = \AW (1 + \d_\t \SH + (\d_\t \SH)^2 + (\d_\t \SH)^3 + \ldots )\d_\t \EML (e_0 \ot b) \\
= \AW (1 + \d_\t \SH + (\d_\t \SH)^2 + (\d_\t \SH)^3 + \ldots )\d_\t (s_0 (e_0), b) \\
= \AW (1 + \d_\t \SH + (\d_\t \SH)^2 + (\d_\t \SH)^3 + \ldots )(d_0 s_0 (e_0) \cdot \t(b),  d_0 (b)) - (d_0 s_0 (e_0),  d_0 (b)) \\
= \AW (1 + \d_\t \SH + (\d_\t \SH)^2 + (\d_\t \SH)^3 + \ldots )(\t(b),  d_0 (b)) - (e_0,  d_0 (b)).
\end{array}
\]
As the operator $\SH = 0$ on $(F \times B)_0$ the only nonzero term of ${(\dg - \dig)(e_0 \ot b)}$ is
\[
\AW (\t(b),  d_0 (b)) - (e_0,  d_0 (b)) =  (\t(b) \ot d_0 (b)) - (e_0 \ot d_0 (b)),
\]
so we have 
\[
t(b) = t_1(b) = p \lambda_0 (\t(b) \ot d_0 (b)) - (e_0 \ot d_0 (b)) = \t(b) - e_0.
\]
If the group $G$ is $0$--reduced, $\t(b) = e_0$ as $e_0$ is the only element in $G_0$ and we have
$t(b) = 0$ for $b \in B_1$. That is why ${y_1 = g_F f_F \sigma(y \ot t(\tilde{d}^{n-1} b )) = 0}$ and we can apply the previous theorem.
\end{proof}

Now we turn to strong equivalences.

\begin{cor}\label{pro2}
Let $B$ and $F$ be simplicial sets, $G$ a simplicial group, ${E = F\times_\t B}$ a TCP, and ${C_{*}(B) \lra EC_{*}(B)}, {C_{*}(F) \lra EC_{*}(F)}$  
strong equivalences with effective chain complexes. 
If $G$ is $0$--reduced or $\r_B$ is trivial (i.e. $EC_{*}(B) = C_{*}(B)$ and all reductions are trivial) then $C_* (F \times_\t B)$ is strongly equivalent to an effective chain complex $EC_{*}(F) \oth EC_{*}(B)$ which can be obtained from the strong equivalences for $C_{*}(B)$ and $C_{*}(F)$ representing $C_{*}(E)$ and an effective chain complex using the Basic and Easy Perturbation Lemmas.
\end{cor}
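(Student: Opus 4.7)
The plan is to combine the reduction $\ce \ra \cf \oth \cb$ from Proposition \ref{TWEZ}, which already provides a strong equivalence $\ce \lra \cf \oth \cb$, with a strong equivalence $\cf \oth \cb \lra EC_{*}(F) \oth EC_{*}(B)$ that I will build separately; Lemma \ref{comp} will then give the desired strong equivalence $\ce \lra EC_{*}(F) \oth EC_{*}(B)$. The second strong equivalence will be assembled from the given strong equivalences by tensoring and then transferring the twisting perturbation through them using the Easy and Basic Perturbation Lemmas.

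Write the given strong equivalences for the base and the fibre as $\cb \la D_B \ra EC_{*}(B)$ and $\cf \la D_F \ra EC_{*}(F)$. Applying Lemma \ref{tens} to each pair I obtain two reductions sharing the common source $D_F \ot D_B$:
\[
\r_C : D_F \ot D_B \ra \cf \ot \cb, \qquad \r_{EC} : D_F \ot D_B \ra EC_{*}(F) \ot EC_{*}(B).
\]
On the target of $\r_C$ I already have the perturbation $t \cap = \df - \dif$ supplied by Proposition \ref{pshi}. First I would apply the Easy Perturbation Lemma \ref{epl} to $\r_C$ to pull this perturbation back to a perturbation $\d_1 = g_C \co (t \cap) \co f_C$ of the differential on $D_F \ot D_B$, upgrading $\r_C$ to a reduction onto $\cf \oth \cb$. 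Then I would apply the Basic Perturbation Lemma \ref{bpl} to $\r_{EC}$ with the same perturbation $\d_1$ to produce a reduction onto $EC_{*}(F) \oth EC_{*}(B)$ equipped with the new differential output by BPL. The two resulting reductions share the source $(D_F \ot D_B, \di + \d_1)$ and exhibit the desired strong equivalence.

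The main obstacle is the nilpotency hypothesis of the BPL application above, namely that $(h_{EC} \co \d_1)^{\alpha}(x) = 0$ for some $\alpha$ depending on $x$. Since $f_C = f_F \ot f_B$ and $g_C = g_F \ot g_B$ both preserve the filtration degree, $\d_1$ inherits the filtration-lowering behaviour of $t \cap$ described in Corollary \ref{lemmo}, while Lemma \ref{tens} makes the chain homotopy $h_{EC}$ raise the filtration by at most one. If $G$ is $0$-reduced, the computation at the end of the proof of Corollary \ref{mycoro} gives $t(b) = 0$ for all $b \in B_1$, so $\d_1$ lowers filtration by at least two and therefore $h_{EC} \co \d_1$ strictly decreases it. If instead $\r_B$ is trivial, $h_B = 0$, so the formula of Lemma \ref{tens} makes $h_{EC}$ act only on the $F$-factor and preserve the filtration, while $\d_1$ still lowers it by at least one. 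In either case $h_{EC} \co \d_1$ strictly lowers the filtration degree, hence is nilpotent on every element, and BPL applies. Composing the two strong equivalences via Lemma \ref{comp} finishes the argument.
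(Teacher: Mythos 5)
Your proposal is correct and follows essentially the same route as the paper: tensor the two given strong equivalences via Lemma \ref{tens} to get two reductions out of $D_F \otimes D_B$, transfer $t\cap$ back to $D_F \otimes D_B$ by the Easy Perturbation Lemma, push it onto $EC_{*}(F)\otimes EC_{*}(B)$ by the Basic Perturbation Lemma with the same filtration-based nilpotency check, and compose with the reduction of Proposition \ref{TWEZ} using Lemma \ref{comp}. Your handling of the trivial-$\rho_B$ case is in fact slightly more careful than the paper's, which asserts that the relevant homotopy is zero when only ``it preserves the filtration degree'' is needed (and true).
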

\begin{proof}
By Proposition \ref{TWEZ} we have a reduction $C_* (F \times_\t B) \ra C_{*}(F) \oth C_{*}(B)$. Since strong 
equivalences are composable, it remains to show that there is a strong 
equivalence $C_{*}(F) \oth C_{*}(B) \lra EC_{*}(F) \oth EC_{*}(B)$.

Having strong equivalences $C_* (B) \la D_* (B) \ra EC_{*}(B)$ and  $C_* (F) \la D_* (F) \ra EC_{*}(F)$ then by Lemma \ref{tens} there is a strong equivalence  
\[
{C_* (F) \ot C_* (B) \la D_* (F)  \ot D_* (B) \ra EC_{*}(F) \ot EC_{*}(B)}\]
consisting of two reductions:
\[
\begin{array}{rl}
\r_1 &= (f_1, g_1, h_1): C_* (F) \ot C_* (B) \la D_* (F)  \ot D_* (B),  \\
\r_2 &= (f_2, g_2, h_2): D_* (F)  \ot D_* (B) \ra EC_* (F) \ot EC_* (B).
\end{array}
\]
Given the perturbation $(t\cap)$ on the chain complex 
$C_* (F) \ot C_* (B)$, we can use the Easy Perturbation Lemma on the reduction   $\r_1 = (f_1,g_1,h_1): C_* (F) \ot C_* (B) \la D_* (F)  \ot D_* (B)$ to get a new reduction 
\[
\r_1 = (f_1,g_1,h_1): C_* (F) \oth C_* (B) \la D_* (F)  \oth D_* (B),
\]
where we introduce a perturbation $g_1 (t \cap ) f_1 $ to the differential of the chain complex $ D_* (F)  \ot D_* (B) $ and the reduction data remains unchanged.
If the nilpotency condition of the composition $(g_1 (t \cap ) f_1) \co h_2$ was satisfied, we could apply the Basic Perturbation Lemma on 
the reduction data ${\r_2 = (f_2, g_2, h_2): D_* (F)  \ot D_* (B) \ra EC_* (F) \ot EC_* (B)}$ to obtain a reduction 
\[
\r_2 ' : D_* (F)  \oth D_* (B) \ra EC_* (F) \oth EC_* (B). 
\]

If $G$ is $0$--reduced, then the filtration degree of the perturbation $g_1 (t \cap ) f_1 $ is $-2$ by Corollaries \ref{lemmo} and \ref{mycoro} 
and as the the filtration degree of $h_2$ is $+1$, the nilpotency condition is satisfied. 
For $\r_B$ trivial, $h_2$ is $0$ and the nilpotency condition is trivially satisfied.

The reductions $\r_1, \r_2 '$ therefore establish a strong equivalence 
\[
{C_* (F) \oth C_* (B) \lra EC_{*}(F) \oth EC_* (B)} 
\]
and, as the strong equivalences are composable, we get $C_* (F \times_\t B) \lra EC_{*}(F) \oth EC_* (B)$.
\end{proof}

\section{Vector fields}
We will now deal with the case in which we have more information about the reduction $\r_B : C_* (B) \ra EC_{*}(B)$. In particular,  $\r_B$ is obtained via a
\emph{discrete vector field}. 
A discrete vector field $V$ on a simplicial set $X$ is a set of ordered pairs $(\sigma, \t)$, where $\sigma, \t $ are nondegenerate 
simplices of $X$, $\sigma = d_i \t$ for exactly one index $i$ and for every two distinct pairs $(\sigma, \t)$, $(\sigma ', \t ')$ we 
have $\sigma'\neq \sigma, \t' \neq \t, \sigma'\neq \t$ and $\t' \neq \sigma$.
By writing $V(\sigma) = \t$, we mean  $(\sigma, \t)\in V$.
Given a dicrete vector field $V$, the nondegenerate simplices of $X$ are divided into three subsets $\S, \T, \C$ as follows:
\begin{itemize}
 \item $\S$ is the set of \emph{source} simplices i.e. the simplices $\sigma$ such that $(\sigma, \t)\in V$,
 \item $\T$ is the set of \emph{target} simplices i.e. the simplices $\t$ such that $(\sigma, \t)\in V$,
 \item $\C$ is the set of \emph{critical} simplices i.e the remaining ones, not occuring in any edge of $V$.
\end{itemize}

A discrete vector field $V$ on a simplicial set $X$ induces a reduction $\r_X = (h_X, f_X, g_X) :C_{*}(X) \ra D_{*}(X)$ (see \cite{vector},\cite{matous}).
We will concentrate on the properties of the induced chain homotopy $h_X$. 
It turns out that $h_X(\sigma)\in \mathbb{Z} \T$ for any $\sigma$ and more importantly $h_X(\sigma) = 0$ whenever $\sigma \in \C \cup \T$.

\begin{Def}
Let $X$ be a simplicial set. For any nondegenerate simplex $\sigma \in X_n$ we will consider the following condition:
\begin{equation*}\tag{$*$}
d_0 \sigma \in \S \quad \text{implies} \quad \sigma \in \S 
\end{equation*}

We say that a discrete vector field $V$ on a simplicial set satisfies $(*)$ if all nondegenerate simplices of 
$X$ satisfy $(*)$. 
\end{Def}

\begin{cor}\label{mycorr}
Let $B$ and $F$ be simplicial sets, $G$ a simplicial group, $E = F\times_\t B$ a TCP and $\r_B :C_{*}(B) \ra EC_{*}(B), \r_F :C_{*}(F) \ra EC_{*}(F)$ be reductions to effective chain complexes. If the reduction $\r_B$ is induced by a vector field satisfying $(*)$, then
there exists a reduction from the chain complex $C_{*}(E)$ to 
an effective chain complex which can be obtained from $\r_B, \r_F$ and $\tau$.
\end{cor}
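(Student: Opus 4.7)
The plan is to reduce the corollary directly to Theorem~\ref{thm1} by verifying its nilpotency hypothesis. Concretely, one needs to show that for every nondegenerate $b \in B_n$ the sequence $b_0 = b$, $b_{i+1} = h_B d_0 b_i$ eventually vanishes. I will prove the sharper uniform statement $(h_B d_0)^2 = 0$ on the normalized chain complex $C_*(B)$, which follows from a short case analysis on the vector-field partition of the nondegenerate simplices into $\S$, $\T$, $\C$, together with condition $(*)$.

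Recall that the chain homotopy $h_B$ produced by the vector field $V$ takes values in $\Z\T$ and vanishes on $\C\cup\T$; hence to compute $h_B d_0\sigma$ it suffices to understand $d_0\sigma$ for each type of nondegenerate $\sigma$. Suppose first $\sigma \in \C$. If the nondegenerate simplex $d_0\sigma$ were a source, condition $(*)$ would force $\sigma \in \S$, contradicting $\sigma \in \C$; hence $d_0\sigma$ is either degenerate or lies in $\Z(\C\cup\T)$, and $h_B d_0\sigma = 0$ (degenerate simplices contribute $0$ in the normalized complex). Suppose next $\sigma \in \T$. The same application of $(*)$ to $\sigma$ forces $\sigma \in \S$, contradicting the disjointness $\S \cap \T = \emptyset$; so again $h_B d_0\sigma = 0$. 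Finally, if $\sigma \in \S$, we only have $b_1 = h_B d_0\sigma \in \Z\T$, but applying the preceding case term-by-term to the target simplices in the support of $b_1$ yields $b_2 = h_B d_0 b_1 = 0$.

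Combining the three cases and extending linearly, $(h_B d_0)^2 b = 0$ for every $b \in C_*(B)$. Therefore the hypothesis of Theorem~\ref{thm1} is satisfied uniformly with $i = 2$ for every $y \ot b$, and the theorem produces a reduction $C_*(E) \ra EC_*(F) \oth EC_*(B)$ built from $\r_B$, $\r_F$ and $\tau$ via the Basic Perturbation Lemma, as claimed.

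The argument is mostly bookkeeping and I expect no serious obstacle; the main conceptual point is to notice that condition $(*)$ plays two slightly different roles—directly ruling out $d_0(\C) \cap \S \neq \emptyset$, and, in combination with disjointness of $\S$ and $\T$, ruling out $d_0(\T) \cap \S \neq \emptyset$—so that the potentially infinite orbit of $h_B d_0$ collapses after two iterations. A small care is needed to handle degenerate faces of $d_0\sigma$, but this is free since we are working in the normalized chain complex, where such terms are zero by definition.
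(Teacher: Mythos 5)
Your proposal is correct and follows essentially the same route as the paper: both reduce the corollary to Theorem~\ref{thm1} by proving $(h_B d_0)^2 = 0$, using that $h_B$ takes values in $\Z\T$, that the contrapositive of $(*)$ forces $d_0$ of a target simplex to lie outside $\S$, and that $h_B$ vanishes on $\C \cup \T$. Your version merely adds an explicit three-case analysis over $\S$, $\T$, $\C$ and the remark about degenerate faces, both of which the paper leaves implicit.
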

\begin{proof}
We show that ${(h_B d_0)^2 = 0}$. Under our conditions for any $b\in B_n$ we have
$b_1 = h_B (d_0 b) \in \mathbb{Z} \T$. As $h_B$ satisfies $(*)$, we see that $d_0 b_1 \in \mathbb{Z}(\C \cup \T)$ and 
consequently, $b_2 = h_B d_0 b_1 = 0$ and we can apply Theorem \ref{thm1}.
\end{proof}

\begin{ex}
An example of a vector field satisfying $(*)$ is so called Eilenberg--Mac\-Lane vector field. 
Let us have $X = K( \mathbb{Z} ,1)$. In the standart model which is infinite (see \cite{may}), 
the simplex $\sigma \in X_n$ can be 
represented as an $n$--tuple $[a_1| \ldots|a_n]$, where $a_1, \ldots, a_n \in \mathbb{Z}$  (see \cite{matous}, page $5$). The face operators are 
$d_0 \sigma = [a_2| \ldots|a_n]$, $d_n \sigma = [a_1| \ldots|a_{n-1}]$,
$d_i \sigma = [a_1| \ldots |a_{i-1}|a_{i}+a_{i+1}|a_{i+2}|\ldots|a_n]$, where $1<i<n$.

For any $\sigma = [a_1| \ldots|a_n]\in X_n$, where $a_n \neq 1$, 
we define the Eilenberg-MacLane vector field $V_\EML$ in the following way:
\[
V_\EML(\sigma) = 
\left\{ 
\begin{array}{ll}
[a_1 |\ldots|a_{n-1}|a_n -1 |1] &\text{for} \quad a_n > 1, \\ 
{[a_1|\ldots|a_{n-1}|1]} &\text{for} \quad a_n < 0.
\end{array}
\right
.\]
Now we can classify the simplices:
\begin{itemize}
 \item $\sigma \in \S$ has the form $[a_1| \ldots|a_n]$, where $a_n \neq 1$ and $n>0$.
 \item $\sigma \in \T$ has the form $[a_1 | \ldots|a_{n-1}|1]$, where $n>1$.
 \item $\sigma \in \C$ is $[]$ and $[1]$.
\end{itemize}
It is easy to check that the vector field $V_\EML$ satisfies $(*)$. Note that Corollary \ref{mycorr} implies that for any 
$E =  F \times_\t K( \mathbb{Z} ,1)$ there is a reduction $C_{*}(E) \ra EC_{*}(E)$ to an effective chain complex if there is a reduction 
$C_{*}(F) \ra EC_{*}(F)$ to an effective chain complex.
\end{ex}

\end{document}